\pgfplotsset{compat=1.12}
\numberwithin{equation}{section}
\newtheorem{theorem}{Theorem}[section]
\newtheorem{lemma}[theorem]{Lemma}
\newtheorem{remark}{Remark}[section]
\providecommand{\keywords}[1]
{
  \small	
  \textbf{\textit{Keywords---}} #1
}
\title{Global existence of solutions in two-species chemotaxis system with two chemicals with sub-logistic sources in 2d }
\author{ Minh Le \\
 Department of Mathematics\\
 Michigan State University\\
  Michigan, MI, 48823 \\
  \texttt{leminh2@msu.edu} }
\date{\today}
\begin{document}
\maketitle
\begin{abstract}
This paper investigates the global existence and boundedness of solutions in a two-species chemotaxis system with two chemicals and sub-logistic sources. The presence of a sub-logistic source in only one cell density equation effectively prevents the occurrence of blow-up solutions, even in fully parabolic chemotaxis systems.
\end{abstract}
\keywords{Chemotaxis, partial differential equations, sub-logistic sources, global existence }
\section{Introduction}
We consider a model involving the interaction of two species through chemotaxis, where each species emits a signal that influences the movement of the other species. Specifically, we study the following PDE in a open bounded domain $\Omega \subset \mathbb{R}^2$
\begin{equation} \label{KS2}
    \begin{cases}
        u_t = \Delta u -\nabla \cdot (u \nabla v)+f(u)\\
        \tau v_t = \Delta v -v+w,\\
        w_t =\Delta w -\nabla \cdot (w \nabla z) \\
        \tau z_t= \Delta z -z +u
    \end{cases}
\end{equation}
where $\tau \in \left \{ 0,1 \right \}$, and $f(u) = ru- \frac{\mu u^2}{\ln^p(u+e)}$, with $r \in \mathbb{R}$, $\mu \geq 0$, and $p\geq 0$, under the homogeneous Neumann boundary condition
\begin{equation} \label{bdry}
    \frac{\partial u}{\partial \nu} =\frac{\partial v}{\partial \nu} =0 ,\qquad (x,t) \in \partial \Omega \times (0, T_{\rm max}),
\end{equation}
where $T_{\rm max} \in (0,\infty]$ is the maximal existence time for classical solutions.
Chemotaxis system, which describes the movement of organisms in response to chemical stimuli has been intensively studied since the mathematical equations were first introduced in \cite{Keller}. The simplest chemotaxis model, known as Keller Segel system, in two spatial dimension has attracted many researchers in the field partly because of its intriguing phenomenon called critical mass. This principle states that if the mass of solutions is less than a certain critical number, called the critical mass, solutions remain global and bounded, whereas if the mass exceeds this critical threshold, solutions may blow-up in finite time. For more details about this topic, readers are referred to \cite{Dolbeault,Dolbeault1,Mizoguchi, NSY,OY} for some global existence results and  \cite{Nagai1,Nagai2,Nagai4,Nagai3} for blow-up results.\\

In the context of one-species chemotaxis systems, the presence of logistic sources, $ru-\mu u^2$, can avoid blow-up (see \cite{Winkler-logistic,Tello+Winkler}) in high dimension $n \geq 3$. In two dimensional bounded domain, the sub-logistic, $ru -\frac{\mu u^2}{\ln^p(u+e)}$ with $p \in [0,1]$, is sufficient to guarantee the global existence and boundedness of solutions (see \cite{Tian4} when $p\in (0,1)$ with $\tau =1$ and \cite{Tian2} $p=1$ with $\tau =0$). Moreover, the logistic sources is even adequate to prevent blow-up for degenerate chemotaxis system \cite{MW2022}. This was later improved in \cite{Minh2} by replacing the logistic sources by sub-logistic ones. \\

The effect of logistic sources or sub-logistic sources on blow-up prevention in two-species chemotaxis models is quite limited, especially when the logistic sources appear only in the first equation of the  system \eqref{KS2}. 
In \cite{THZ}, the authors study the global existence and long time behavior of solutions to the following system 
\begin{equation} \label{KS3}
    \begin{cases}
        u_t = \Delta u -\nabla \cdot (u \nabla v)+r_1u- \mu_1 u^2\\
      v_t = \Delta v -v+w,\\
        w_t =\Delta w -\nabla \cdot (w \nabla z) +r_2w -\mu_2 w^2 \\
        0= \Delta z -z +u,
    \end{cases}
\end{equation}
with $r_1,r_2 \in \mathbb{R}$ and $\mu_1,\mu_2>0.$ It was shown that if $\mu_1 \mu_2 $ is sufficiently large then all solutions to \eqref{KS3} are global and bounded for any $n \geq 1$. 
For further studies on global existence and equilibrium solutions for two species with logistic sources appearing in two cell density equations, readers can refer to \cite{XM, TW2014, CNT}. In fact, the analysis framework to prove the global existence of solutions to \eqref{KS3} for sufficiently large  $\mu_1\mu_2$ is similar to the one for one species \cite{Winkler-logistic}. However, there has no result so far considering the presence of a logistic source in only one species. Our purpose is to address that the appearance of the sub-logistic sources in one species can effectively eliminate the occurrence of finite time or infinite time blow-up solutions in two dimensional domains. Precisely, we have the following theorem:
\begin{theorem} \label{p}
    Assume that $f(u)=ru -\frac{\mu u^2}{\ln^p(u+e)}$, where $r,\mu >0$, and $ p\in [0, 1)$, and nonnegative initial data $u_0,w_0 \in C^0(\bar{\Omega})$. Then there exists a unique quadruple $(u,v,w,z)$ of nonnegative functions
    \begin{align*}
        u &\in C^0 \left ( \bar{\Omega}\times[0,\infty) \right ) \cap C^{2,1} \left ( \bar{\Omega}\times(0,\infty) \right ),  \\
        v &\in C^{2,0} \left ( \bar{\Omega}\times(0,\infty) \right ), \\
        w &\in C^0 \left ( \bar{\Omega}\times[0,\infty) \right ) \cap C^{2,1} \left ( \bar{\Omega}\times(0,\infty) \right ) \qquad \text{ and} \\
        z &\in C^{2,0} \left ( \bar{\Omega}\times(0,\infty) \right ),
    \end{align*}
    which solve the system \eqref{KS2} in the classical pointwise sense in $\Omega \times (0,\infty)$. Moreover, 
    \begin{equation} \label{gb}
        \sup_{t \in (0,\infty)} \left \{ \left \| u(\cdot,t) \right \|_{L^\infty(\Omega)}+\left \| v(\cdot,t) \right \|_{L^{\infty}(\Omega)}+\left \| w(\cdot,t) \right \|_{L^\infty(\Omega)}+\left \| z(\cdot,t) \right \|_{L^{\infty}(\Omega)} \right \} <\infty.
    \end{equation}
\end{theorem}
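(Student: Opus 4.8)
The plan is the standard two-dimensional chemotaxis scheme---local existence with an extensibility criterion, an $L^1$ bound, a dissipative functional estimate that cashes in the sub-logistic source, then a bootstrap to $L^\infty$---with the wrinkle that only one density equation carries damping. First I would produce a local classical solution on a maximal interval $(0,T_{\max})$ by the usual fixed-point/Amann-type argument, with nonnegativity of $u,v,w,z$ from the maximum principle and the extensibility criterion that $T_{\max}<\infty$ forces $\limsup_{t\uparrow T_{\max}}(\|u(\cdot,t)\|_{L^\infty}+\|w(\cdot,t)\|_{L^\infty})=\infty$ (the signals $v,z$ are slaved to the densities through parabolic/elliptic smoothing, so they need not appear). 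Integrating the $u$-equation and using the super-linearity of $s\mapsto s^2/\ln^p(s+e)$ in a Jensen-type estimate gives an ODE inequality forcing $\sup_t\|u(\cdot,t)\|_{L^1}<\infty$; integrating the $w$-equation gives the conserved mass $\|w(\cdot,t)\|_{L^1}\equiv m:=\|w_0\|_{L^1}$, and that $m$ is a fixed number is used crucially below.

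The technical core is a coupled entropy estimate for $\mathcal E(t)=\int_\Omega u\ln u+\int_\Omega w\ln w$ (augmented, when $\tau=1$, by small multiples of $\int_\Omega|\nabla v|^2,\int_\Omega|\nabla z|^2$ together with $\Delta v=v_t+v-w$, $\Delta z=z_t+z-u$ to handle the parabolic pieces). Differentiating, the chemotaxis terms produce the dissipations $\int_\Omega\frac{|\nabla u|^2}{u},\int_\Omega\frac{|\nabla w|^2}{w}$ and a cross term $2\int_\Omega uw$, which is unavoidable because of the indirect signalling ($\int\nabla u\cdot\nabla v=-\int u\Delta v=-\int uv+\int uw$, symmetrically for $w,z$). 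The source contributes---after discarding nonnegative pieces and absorbing its linear part---a further good term $\gtrsim\mu\int_\Omega u^2\ln^{1-p}(u+e)$, where $p<1$ is essential since this must beat any fixed multiple of $u^2$. The key step is $2\int_\Omega uw\le\varepsilon\|w\|_{L^2}^2+\varepsilon^{-1}\|u\|_{L^2}^2$: absorb $\|w\|_{L^2}^2$ into $\int_\Omega\frac{|\nabla w|^2}{w}$ via the 2D Gagliardo--Nirenberg inequality $\|w\|_{L^2}^2\le Cm\int_\Omega\frac{|\nabla w|^2}{w}+Cm^2$, picking $\varepsilon$ small against the fixed $m$; absorb $\|u\|_{L^2}^2$ into $\mu\int_\Omega u^2\ln^{1-p}(u+e)$ because $u^2=o(u^2\ln^{1-p}(u+e))$. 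Combined with log-Sobolev-type lower bounds (e.g. $\int_\Omega w\ln w\le\varepsilon\int_\Omega\frac{|\nabla w|^2}{w}+C(\varepsilon,m)$) this gives $\frac{d}{dt}\mathcal E+\gamma\mathcal E+(\text{coercive dissipation})\le C$; hence $\sup_t\mathcal E(t)<\infty$ and $\sup_t\int_t^{t+1}\big(\int_\Omega u^2\ln^{1-p}(u+e)+\int_\Omega\frac{|\nabla w|^2}{w}\big)\,ds<\infty$, so $u,w$ are bounded in $L\ln L$ uniformly in time and consequently $v,z\in L^\infty$.

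With those bounds I would bootstrap to $L^\infty$ using the no-feedback structure ($z$ driven by $u$, $v$ by $w$). A uniform $L^\rho$-bound on $u$ with $\rho>1$ gives $\Delta z\in L^\rho$ uniformly; testing the $w$-equation with $w^{\kappa-1}$ and using 2D Gagliardo--Nirenberg with the fixed mass $m$ then raises $w$ to every $L^\kappa$, $\kappa<\infty$; this bounds $\nabla v$, and testing the $u$-equation with $u^{\kappa-1}$---the sub-logistic dissipation again swallowing the lower-order forcing---raises $u$ to every $L^\kappa$; finishing with a Moser iteration (or $L^p$--$L^q$ semigroup estimates, valid now that $\nabla v,\nabla z$ are bounded) yields $\sup_t(\|u\|_{L^\infty}+\|w\|_{L^\infty})<\infty$, hence \eqref{gb} after returning to the chemical equations and $T_{\max}=\infty$ by extensibility; uniqueness is part of the local theory.

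The step I expect to be the main obstacle is the cross term $\int_\Omega uw$---and its cubic analogues $\int_\Omega u^2w,\int_\Omega w^2u$ that resurface in an $L^2$-version of the bootstrap: since the stabilising source lives only in the $u$-equation, $w$ has no self-damping, so every a priori bound for $w$ must be extracted from $u$ through the coupling. The two features that make this balance are exactly those highlighted above: the conservation, hence finiteness, of $\|w\|_{L^1}$, which enters the Gagliardo--Nirenberg absorption with a fixed constant, and the strictly sub-logistic growth $p<1$, which provides precisely the slack needed to absorb the two-dimensionally critical quadratic term into the damping.
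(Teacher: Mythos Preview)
Your proposal is correct and follows essentially the same route as the paper: the core $L\ln L$ estimate rests on splitting the cross term $\int_\Omega uw$, absorbing $\|w\|_{L^2}^2$ into $\int_\Omega |\nabla w|^2/w$ via two-dimensional Gagliardo--Nirenberg against the conserved mass $m=\|w_0\|_{L^1}$, and absorbing $\|u\|_{L^2}^2$ into the sub-logistic damping $\mu\int_\Omega u^2\ln^{1-p}(u+e)$ (this is exactly where $p<1$ is used), followed by an $L^2$ and then Moser bootstrap. One minor correction for the case $\tau=1$: in the paper's functional the multiple on $\int_\Omega|\nabla z|^2$ is taken large (of order $1/\epsilon$), not small as you wrote---the asymmetry is forced because the $w$-equation has no self-damping, so the Young split of its chemotaxis term $-\int_\Omega\phi(w)\Delta z$ must put the small constant on $w^2$ and hence the large one on $(\Delta z)^2$, which then requires a large coefficient $B$ to absorb it.
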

\begin{remark}
    Theorem \ref{p} works for both parabolic-elliptic systems when $\tau =0$ and fully parabolic systems when $\tau =1$.
\end{remark}
In the following sections, we will briefly recall the local well-posedness results for solutions to the system \eqref{KS2} in Section \ref{section2}, and explore the mechanisms behind blow-up prevention by sub-logistic sources in Section \ref{section3}.

\section{Local existence} \label{section2}
The local existence of solutions to the system \eqref{KS2} under homogeneous Neumann boundary conditions can be proved by adapting approaches that are well-established in the context chemotaxis models with logistic sources (see \cite{TW}[Theorem 2.1] for parabolic-elliptic models and \cite{Winkler-2011}[Lemma 1.1] for fully parabolic models). 
\begin{lemma} 
    Suppose that $u_0$ and $v_0$ are in $C^0(\bar(\Omega))$ are nonnegative. Then there exist $T_{\rm max} \in (0, \infty]$ and a unique quadruple $(u,v,w,z)$ of nonnegative functions from $C^0(\bar{\Omega}\times (0,T_{\rm max}))\cap C^{2,1}(\bar{\Omega}\times (0,T_{\rm max})) $ solving \eqref{KS2} under boundary condition \eqref{bdry} classically in $\Omega \times (0,T_{\rm max})$. Moreover, if $T_{\rm max }<\infty$, then
    \begin{equation} \label{lc}
       \limsup_{t\to T_{\rm max}} \left ( \left \| u(\cdot,t) \right \|_{L^\infty(\Omega)}+\left \| w(\cdot,t) \right \|_{L^\infty(\Omega)} \right) = \infty.
    \end{equation}
\end{lemma}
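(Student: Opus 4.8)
The plan is to adapt the by-now standard contraction-mapping arguments for chemotaxis systems, namely \cite{TW}[Theorem 2.1] in the parabolic-elliptic case $\tau=0$ and \cite{Winkler-2011}[Lemma 1.1] in the fully parabolic case $\tau=1$, to the present cross-coupled two-species setting. Denoting by $(e^{t\Delta})_{t\ge 0}$ the Neumann heat semigroup on $\Omega$, I would first recast \eqref{KS2} as a system of Duhamel (variation-of-constants) identities. For the two cell densities these read
\begin{align*}
u(t) &= e^{t\Delta}u_0 + \int_0^t e^{(t-s)\Delta}\big[-\nabla\!\cdot\!\big(u(s)\nabla v(s)\big) + f(u(s))\big]\,ds, \\
w(t) &= e^{t\Delta}w_0 - \int_0^t e^{(t-s)\Delta}\,\nabla\!\cdot\!\big(w(s)\nabla z(s)\big)\,ds,
\end{align*}
while $v$ and $z$ are recovered from $w$ and $u$ respectively: when $\tau=1$ through the semigroup $(e^{t(\Delta-1)})_{t\ge 0}$ applied to the initial data plus a Duhamel integral of $w$ (resp.\ $u$), and when $\tau=0$ through the elliptic solution operators $v=(1-\Delta)^{-1}w$, $z=(1-\Delta)^{-1}u$ under Neumann conditions. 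These identities characterize mild solutions, and the associated fixed-point map $\Phi$ sends an input pair $(\tilde u,\tilde w)$ to the output $(u,w)$ given by the right-hand sides above, with $v,z$ precomputed from $\tilde w,\tilde u$ and with $\tilde u,\tilde w$ inserted in the taxis and reaction terms. I would set this up on a closed ball in $X_T:=C^0([0,T];C^0(\bar\Omega))^2$ for a small $T\in(0,1)$.

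The core of the argument is to show that, for $T$ sufficiently small, $\Phi$ maps a suitable ball into itself and is a contraction. Here I would invoke the standard smoothing estimates for the Neumann heat semigroup, in particular $\|\nabla e^{t\Delta}\phi\|_{L^\infty}\le C\,t^{-1/2-1/q}\|\phi\|_{L^q}$ and $\|e^{t\Delta}\nabla\!\cdot\!\phi\|_{L^\infty}\le C\,t^{-1/2-1/q}\|\phi\|_{L^q}$ for $q>n=2$, together with elliptic regularity and the embedding $W^{2,q}\hookrightarrow W^{1,\infty}$ to control $\nabla v$ and $\nabla z$ in $L^\infty$ by $\|\tilde w\|_{L^\infty}$ and $\|\tilde u\|_{L^\infty}$. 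Since $q>2$ the time singularities $t^{-1/2-1/q}$ are integrable near $0$, so each taxis contribution is bounded by $C\,T^{\theta}$ for some $\theta>0$; as $f$ is smooth, hence locally Lipschitz, on the relevant bounded range (only local Lipschitz continuity of $f$, not its sub-logistic structure, enters at this stage), the reaction contribution is likewise $O(T)$. Choosing $T$ small then yields the self-mapping and contraction properties, and the Banach fixed point theorem produces a unique mild solution on $[0,T]$; a standard parabolic bootstrap (Hölder estimates and Schauder theory, using that $\nabla v,\nabla z$ become Hölder continuous) upgrades it to a classical solution in $\bar\Omega\times(0,T)$, with uniqueness inherited from the contraction.

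Nonnegativity then follows from the maximum principle. Writing the $u$-equation in nondivergence form as $u_t=\Delta u-\nabla v\cdot\nabla u-(\Delta v)\,u+f(u)$ and noting $f(0)=0$, the constant $0$ is a subsolution, so $u_0\ge 0$ forces $u\ge 0$; the same reasoning with no reaction term gives $w\ge 0$ from $w_0\ge 0$. Consequently $v$ and $z$ solve linear parabolic (or elliptic) problems with nonnegative sources $w,u$ and nonnegative data, whence $v,z\ge 0$ as well.

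Finally, I would establish the extensibility criterion \eqref{lc} by the usual continuation argument. Let $T_{\rm max}$ be the supremum of all $T$ for which the above solution exists, and suppose, for contradiction, that $T_{\rm max}<\infty$ while $\limsup_{t\to T_{\rm max}}\big(\|u(\cdot,t)\|_{L^\infty}+\|w(\cdot,t)\|_{L^\infty}\big)=:M<\infty$. Uniform $L^\infty$ bounds on $u$ and $w$ propagate, through the $v$- and $z$-equations and the smoothing and elliptic estimates above, to uniform-in-$t$ bounds on $\|\nabla v(\cdot,t)\|_{L^\infty}$ and $\|\nabla z(\cdot,t)\|_{L^\infty}$ on $(0,T_{\rm max})$; feeding these back into the Duhamel formulas shows that $(u(\cdot,t),w(\cdot,t))$ remains within a fixed bounded subset of $C^0(\bar\Omega)^2$ as $t\uparrow T_{\rm max}$. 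Since the length of the existence interval furnished by the fixed point argument depends only on the size of the data in this space, the solution can be restarted from any $t$ close to $T_{\rm max}$ and continued by a fixed positive time step beyond $T_{\rm max}$, contradicting maximality. I expect the main obstacle to lie in the bookkeeping for this cross-coupled system: because $v$ is generated by $w$ and $z$ by $u$, the two taxis terms are entangled, so the contraction and the a priori propagation of bounds must be closed simultaneously in the pair $(u,w)$ rather than for each species in isolation.
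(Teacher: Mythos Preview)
Your proposal is correct and follows exactly the route the paper indicates: the paper does not spell out a proof at all but simply states that the result follows by adapting the contraction-mapping arguments of \cite{TW}[Theorem~2.1] (for $\tau=0$) and \cite{Winkler-2011}[Lemma~1.1] (for $\tau=1$), which is precisely the Duhamel/fixed-point scheme, semigroup smoothing, maximum-principle nonnegativity, and continuation argument you outline. In fact you provide considerably more detail than the paper itself.
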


\section{Global boundedness with sub-logistic sources. Proof of Theorem \ref{p}} \label{section3}
The subsequent lemma holds a central position in this paper, serving as a cornerstone of our work. A noteworthy innovation introduced herein is the functional described by \eqref{keyfuctional}, with the specific values of the positive parameters $A$ and $B$ to be determined subsequently in the analysis.  It is notable that, within the context of inequality \eqref{prf.priori.12}, we identify a unique choice for $A$ that depends on the parameter $\epsilon$, resulting in the nonpositivity of the first term on the right-hand side of \eqref{prf.priori.12}.
\begin{lemma} \label{LlnL}
Under the assumptions as in Theorem \ref{p}, there exists a positive constant $C$ such that
    \begin{equation}
         \int_\Omega u(\cdot,t) \ln (u(\cdot,t)+e)+\int_\Omega w(\cdot,t) \ln (w(\cdot,t)+e) +\tau \int_\Omega |\nabla v(\cdot,t)|^2 +\tau \int_\Omega |\nabla z(\cdot,t)|^2  <C,
    \end{equation}
\end{lemma}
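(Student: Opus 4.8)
The plan is to obtain an absorptive differential inequality for a weighted version of the quantity in the statement. Set $h(s)=s\ln(s+e)$, so $h'(s)=\ln(s+e)+\frac{s}{s+e}$ and $h''(s)=\frac{s+2e}{(s+e)^2}>0$, and let $P(s)=\int_0^s\sigma h''(\sigma)\,d\sigma=s-e+\frac{e^2}{s+e}$, which obeys $0\le P(s)\le s$. I would work with
\[
\mathcal F(t):=A\int_\Omega h(u)+B\int_\Omega h(w)+\tau P_0\int_\Omega|\nabla v|^2+\tau Q_0\int_\Omega|\nabla z|^2,
\]
where $A,B,P_0,Q_0>0$ are fixed later in the argument; since every summand is nonnegative and $\mathcal F(0)<\infty$, it suffices to show $\sup_t\mathcal F(t)<\infty$. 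Before differentiating, I would record the free bounds: integrating the $u$-equation and using that $(r+1)s-\mu s^2\ln^{-p}(s+e)$ is bounded above gives $f(u)\le C_0-u$, hence $\sup_t\int_\Omega u\le C$; the $w$-equation is in divergence form, so $\int_\Omega w\equiv\int_\Omega w_0$; and, when $\tau=1$, integrating the $v$- and $z$-equations and using these two facts gives $\sup_t(\int_\Omega v+\int_\Omega z)\le C$.

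Differentiating along the flow and integrating the diffusion and taxis terms by parts (the homogeneous Neumann conditions killing the boundary contributions) gives $\frac{d}{dt}\int_\Omega h(u)=-\int_\Omega h''(u)|\nabla u|^2-\int_\Omega P(u)\Delta v+\int_\Omega h'(u)f(u)$, and the identical identity for $\int_\Omega h(w)$ with $u,v$ replaced by $w,z$ and with no source. The essential point about the source is
\[
\int_\Omega h'(u)f(u)\le r\int_\Omega h(u)+r\int_\Omega u-\mu\int_\Omega u^2\ln^{1-p}(u+e),
\]
so that $\mathcal F$ acquires the strong dissipation $-\mu A\int_\Omega u^2\ln^{1-p}(u+e)$. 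For $\tau=1$ I would also compute, from the second and fourth equations after one Young's inequality on $-2\int_\Omega w\Delta v$,
\[
\frac{d}{dt}\int_\Omega|\nabla v|^2\le-\int_\Omega|\Delta v|^2-2\int_\Omega|\nabla v|^2+\int_\Omega w^2,
\]
and its $z$-counterpart with $\int_\Omega u^2$ in place of $\int_\Omega w^2$; for $\tau=0$ one simply substitutes $\Delta v=v-w$, which makes the taxis integral $-\int_\Omega P(u)(v-w)\le\int_\Omega P(u)w\le\int_\Omega uw$, and symmetrically for the $w$-block.

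The crux is then to absorb all non-dissipative terms into the dissipations $-\mu A\int_\Omega u^2\ln^{1-p}(u+e)$, $-B\int_\Omega h''(w)|\nabla w|^2$, and $-\tau P_0\int_\Omega|\Delta v|^2$, $-\tau Q_0\int_\Omega|\Delta z|^2$, using two mechanisms. First, the sub-logistic gain: since $1-p>0$, for every $M>1$
\[
\int_\Omega u^2\le M^2|\Omega|+\frac{1}{\ln^{1-p}(M+e)}\int_\Omega u^2\ln^{1-p}(u+e),
\]
and, splitting at height $M$ and using $\int_\Omega u\le C$ and $\int_\Omega w\le C$, also $\int_\Omega h(u)$ and $\int_\Omega uw$ become $\le\varepsilon\int_\Omega u^2\ln^{1-p}(u+e)+\eta\int_\Omega w^2+C(\varepsilon,\eta)$ with $\varepsilon,\eta>0$ arbitrary --- this is exactly what lets the estimate run for \emph{every} $\mu>0$ and is the sole place where $p<1$ is used. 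Second, the $w^2$-terms (from the cross-diffusion, and from the $v$-equation when $\tau=1$) must be paid for by diffusion alone, because $w$ has no damping: the two-dimensional Gagliardo--Nirenberg inequality applied to $\sqrt{w+e}$, together with $\int_\Omega w\equiv\int_\Omega w_0$, gives
\[
\int_\Omega w^2\le C_\ast\int_\Omega h''(w)|\nabla w|^2+C_\ast,\qquad C_\ast=C_\ast(\Omega,\|w_0\|_{L^1(\Omega)}),
\]
which is absorbed into $-B\int_\Omega h''(w)|\nabla w|^2$ by taking $B$ (and, for $\tau=1$, $Q_0$) large relative to $C_\ast$. For $\tau=1$ the taxis integrals are handled by Young's inequality against the $\Delta$-dissipation, $-A\int_\Omega P(u)\Delta v\le\frac{\epsilon}{2}\int_\Omega|\Delta v|^2+\frac{A^2}{2\epsilon}\int_\Omega u^2$ and likewise for the $w$-block; and here is where the promised choice of $A$ appears --- with $P_0$ already fixed, requiring the net coefficient $\frac{\epsilon}{2}-P_0$ of $\int_\Omega|\Delta v|^2$ to be nonpositive pins $A$ down as a definite function of the Young parameter $\epsilon$.

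Assembling everything, with the constants fixed in the order $\epsilon$ (hence $A$), then $B,Q_0$ large, then the small splitting parameters, one reaches an inequality of the form
\[
\frac{d}{dt}\mathcal F\le-c_1\int_\Omega u^2\ln^{1-p}(u+e)-c_2\int_\Omega h''(w)|\nabla w|^2-c_3\tau\Big(\int_\Omega|\nabla v|^2+\int_\Omega|\nabla z|^2\Big)+C.
\]
Because $\int_\Omega h(u)\le\varepsilon\int_\Omega u^2\ln^{1-p}(u+e)+C$ and $\int_\Omega h(w)\le\frac12\int_\Omega w^2+C\le C_\ast\int_\Omega h''(w)|\nabla w|^2+C$, the right side is $\le-c\mathcal F+C$, so $\mathcal F(t)\le\max\{\mathcal F(0),C/c\}$ on $(0,T_{\rm max})$ by ODE comparison --- the assertion. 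I expect the main obstacle to be the $w$-block: the second species having no source, every coupling term quadratic in $w$ has to be swallowed by the single dissipation $\int_\Omega h''(w)|\nabla w|^2$ (feasible only in two dimensions, and only after enlarging $B$), and this must be done consistently with the $\epsilon$--$A$ balance controlling the $|\Delta v|^2$ and $|\Delta z|^2$ terms; ensuring all these smallness and largeness requirements are mutually compatible is the delicate part.
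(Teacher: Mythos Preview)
Your proposal is correct and takes essentially the same route as the paper: the same weighted energy functional, the same treatment of the cross-diffusion terms via $P(s)\le s$ and Young's inequality (or the substitution $\Delta v=v-w$ when $\tau=0$), the same use of $p<1$ to absorb any fixed multiple of $\int_\Omega u^2$ into $-\mu\int_\Omega u^2\ln^{1-p}(u+e)$, and the same two-dimensional Gagliardo--Nirenberg step to absorb $\int_\Omega w^2$ into $-\int_\Omega h''(w)|\nabla w|^2$ using the conserved mass $\int_\Omega w\equiv\int_\Omega w_0$. The only cosmetic difference is where the free constants sit: the paper uses coefficients $(1,1,\tfrac{A}{2},\tfrac{B}{2})$ and then takes $A=2\epsilon$ small and $B=\epsilon+\tfrac{1}{4\epsilon}$, which keeps the $w$-taxis term free of any large prefactor so that the total $\int_\Omega w^2$-coefficient is $O(\epsilon)$; in your parametrization, the stated order ``$\epsilon$ first, then $B$ large'' together with ``likewise for the $w$-block'' would make the $w$-taxis contribute $\tfrac{B^2}{2\epsilon}\int_\Omega w^2$, which outpaces the $-B\int_\Omega h''(w)|\nabla w|^2$ dissipation, so you should either take $B=1$ (as the paper effectively does) or use an independent Young parameter there---a bookkeeping adjustment, not a gap.
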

  for all $t\in (0,T_{\rm max})$.
\begin{proof}
    We define 
    \begin{align}\label{keyfuctional}
         y(t) := \int_\Omega u\ln(u+e)+ \int_\Omega w\ln(w+e) +\frac{A}{2} \int_\Omega |\nabla v|^2 +\frac{B}{2} \int_\Omega |\nabla z|^2 ,
    \end{align}
    where $A,B>0$ will be determined later. Differentiating $y$ in time, we obtain
   \begin{align} \label{prf.priori.1}
        y'(t) +y(t)&= \int_\Omega \left ( \ln(u+e)+\frac{u}{u+e} \right ) \left ( \Delta u- \nabla \cdot (u\nabla v) +ru -\frac{\mu u^2}{\ln^p(u+e)} \right ) \notag \\
        &\int_\Omega \left ( \ln(w+e)+\frac{w}{w+e} \right ) \left ( \Delta w- \nabla \cdot (w\nabla z) \right ) \notag \\
        &+ \tau A \int_\Omega \nabla v \cdot \nabla \left ( \Delta v -v +w \right ) \notag \\
        &+\tau B \int_\Omega \nabla z \cdot \nabla \left ( \Delta z -z +u \right ) \notag \\
        &:= I_1 +I_2+I_3+I_4.
    \end{align}
    In case $\tau =1$, we use integration by parts to obtain:
    \begin{align} \label{prf.priori.2}
        I_1 &= -\int_\Omega \frac{|\nabla u|^2}{u+e} -\int_\Omega \frac{e|\nabla u|^2}{(u+e)^2} +\int_\Omega \left (\frac{u}{u+e}+\frac{eu}{(u+e)^2}  \right )  \nabla u \cdot \nabla v  \notag \\
        &+ \int_\Omega\left ( \ln(u+e)+\frac{u}{u+e} \right ) \left ( ru -\frac{\mu u^2}{\ln^p(u+e)} \right ). 
    \end{align}  
    Let us define
    \[\phi(u):=\int_0 ^u \frac{s}{s+e}+\frac{es}{(s+e)^2}\, ds,\]
    we see that $\phi(u) \leq u$. When $\tau =0$, by integration by parts and elementary inequality, we make use of the second equations to obtain that:
    \begin{align}\label{prf.priori.3}
        \int_\Omega \left (\frac{u}{u+e}+\frac{eu}{(u+e)^2}  \right )  \nabla u \cdot \nabla v &= \int_\Omega \nabla \phi(u) \cdot \nabla v =-\int_\Omega \phi(u) \Delta v \notag \\
        &= \int_\Omega \phi(u)(w-v) \leq \int_\Omega uw \notag \\
        &\leq \epsilon \int_\Omega w^2 +\frac{1}{4\epsilon}\int_\Omega u^2 \notag \\
        &\leq \epsilon \int_\Omega w^2+\epsilon \int_\Omega u^2 \ln^{1-p}(u+e)+c,
    \end{align}
    where the last inequality comes from the fact that for any $\delta>0$, there exists $C>0$ depending on $\delta$ such that
\begin{align*}
     u^2 \leq \delta u^2\ln^{1-p}(u+e)+C(\delta),  \qquad 0\leq p <1.
\end{align*}
    We apply similar argument in case $\tau =1$ to obtain
    \begin{align} \label{prf.priori.3'}
        \int_\Omega \left (\frac{u}{u+e}+\frac{eu}{(u+e)^2}  \right )  \nabla u \cdot \nabla v &=-\int_\Omega \phi(u) \Delta v \notag \\
        &\leq \epsilon \int_\Omega (\Delta v)^2 +\frac{1}{4\epsilon} \int_\Omega \phi^2(u) \notag \\
        &\leq \epsilon \int_\Omega (\Delta v)^2 +\frac{1}{4\epsilon}  \int_\Omega u^2 \notag \\
        &\leq \epsilon \int_\Omega (\Delta v)^2+ \epsilon \int_\Omega u^2 \ln^{1-p}(u+e)+c ,
    \end{align}
One can verify that for any $\epsilon>0$, there exists $c>0$ depending on $\epsilon$ such that
    \begin{align} \label{prf.priori.4}
    \int_\Omega\left ( \ln(u+e)+\frac{u}{u+e} \right ) \left ( ru -\frac{\mu u^2}{\ln^p(u+e)} \right ) \leq (\epsilon -\mu)\int_\Omega u^2 \ln^{1-p}(u+e)+c.
\end{align}
From \eqref{prf.priori.2} to \eqref{prf.priori.4}, we obtain that
\begin{align}\label{prf.priori.5}
     I_1 \leq  (2\epsilon -\mu)\int_\Omega u^2 \ln^{1-p}(u+e) + \epsilon \int_\Omega w^2 +c, \quad \text{for }\tau =0,
\end{align}
and,
\begin{align}\label{prf.priori.5'}
    I_1 \leq  (2\epsilon -\mu)\int_\Omega u^2 \ln^{1-p}(u+e) + \epsilon \int_\Omega (\Delta v)^2 +c, \quad \text{for }\tau =1.
\end{align}
By similar arguments, one can also obtain that
\begin{align}
    I_2 \leq -\int_\Omega  \frac{|\nabla w|^2}{w+e} + \epsilon \int_\Omega w^2 +\epsilon \int_\Omega u^2 \ln^{1-p}(u+e) +c, \quad \text{for } \tau =0,
\end{align}
and
\begin{align}\label{prf.priori.6}
    I_2 \leq - \int_\Omega \frac{|\nabla w|^2}{w+e} +\epsilon \int_\Omega w^2 + \frac{1}{4\epsilon} \int_\Omega (\Delta z)^2, \quad \text{for }  \tau =1 . 
\end{align}
By integration by parts and elementary inequalities, we have
\begin{align}\label{prf.priori.7}
    I_3 &= -\tau A\int_\Omega (\Delta v)^2 -\tau A\int_\Omega |\nabla v|^2 -\tau A\int_\Omega w \Delta v \notag \\
    &\leq \tau (\frac{A^2}{4\epsilon}-A) \int_\Omega (\Delta v)^2 - \tau A\int_\Omega |\nabla v|^2 + \epsilon \tau  \int_\Omega w^2,
\end{align}
and
\begin{align}\label{prf.priori.8}
    I_4 &= - \tau B\int_\Omega (\Delta z)^2 -\tau B\int_\Omega |\nabla z|^2 - \tau B\int_\Omega u \Delta z \notag \\
    &\leq \tau(\epsilon -B) \int_\Omega (\Delta z)^2 -\tau B\int_\Omega |\nabla z|^2 + \tau \frac{B^2}{4\epsilon} \int_\Omega u^2 \notag \\
    &\leq \tau (\epsilon -B) \int_\Omega (\Delta z)^2 -\tau B\int_\Omega |\nabla z|^2 + \tau \epsilon \int_\Omega u^2 \ln^{1-p}(u+e)+c.    
\end{align}
One can verify that 
\begin{align}\label{prf.priori.9}
    \int_\Omega u \ln(u+e) + \int_\Omega w \ln(w+e) \leq \epsilon \int_\Omega u^2 \ln^{1-p}(u+e) + \epsilon \int_\Omega w^2 +c. 
\end{align}
From \eqref{prf.priori.5} to \eqref{prf.priori.9}, we have
\begin{align}\label{prf.priori.10}
    y'(t) +y(t) &\leq -\int_\Omega \frac{|\nabla w|^2}{w+e}+   (4\epsilon -\mu )\int_\Omega u^2\ln^{1-p}(u+e)+3\epsilon \int_\Omega w^2 \notag\\ &+\tau \left ( \frac{A^2}{4\epsilon} +\epsilon -A\right ) \int_\Omega (\Delta v)^2 
    + \tau \left ( \epsilon+\frac{1}{4\epsilon} -B \right ) \int_\Omega (\Delta z)^2 +c.
\end{align}
The third term can be controlled by Gagliardo–Nirenberg interpolation inequality
\begin{align}\label{prf.priori.11}
    3\epsilon \int_\Omega w^2 &\leq 3C_{GN}\epsilon \int_\Omega \frac{|\nabla w|^2}{w+e} \int_\Omega (w+e) +3C_{GN} \epsilon \left ( \int_\Omega (w+e) \right )^2 \notag \\
    & \leq 3C_{GN}\epsilon \left (\int_\Omega w_0+e|\Omega| \right) \int_\Omega \frac{|\nabla w|^2}{w+e} +3C_{GN}\epsilon \left (\int_\Omega w_0+e|\Omega| \right)^2.  
\end{align}
We substitute $\epsilon = \min \left \{ \frac{\mu}{4}, \frac{1}{3G_{GN}} \left (\int_\Omega w_0+e|\Omega|  \right)^{-1}  \right \}$ into \eqref{prf.priori.10} and \eqref{prf.priori.11} to obtain
\begin{align} \label{prf.priori.12}
    y'(t)+y(t) \leq \tau \left ( \frac{A^2}{4\epsilon} +\epsilon -A\right ) \int_\Omega (\Delta v)^2 
    + \tau \left ( \epsilon+\frac{1}{4\epsilon} -B \right ) \int_\Omega (\Delta z)^2 +c.
\end{align}
Now we can choose $A= 2\epsilon$ and $B = \epsilon + \frac{1}{4\epsilon}$ and make use of Gronwall inequality to complete the proof.
\end{proof}
We are now ready to prove the main theorem.

\begin{proof}[Proof of Theorem \ref{p}]
We employ the arguments from \cite{TW2015}[Lemma 4.2] with some modifications, and leverage  Lemma \ref{LlnL}  to derive the following inequality for all $t\in (0,T_{\rm max})$
\[
\left \|u (\cdot,t) \right \|_{L^2(\Omega)}+\left \|w (\cdot,t) \right \|_{L^2(\Omega)} <C.
\]
Subsequently, we apply Moser-type iterations, akin to \cite{TW2014}[Lemma 3.2] to establish the boundedness of  $u$ and $w$ in $\Omega \times (0,T_{\rm max})$. Combining this with \eqref{lc}, we conclude that $T_{\rm max} = \infty$. Employing elliptic regularity for $\tau =0$, and parabolic regularity for $\tau =1$, we ascertain the global boundedness of $v$ and $z$ in $\Omega \times (0,\infty)$. Consequently, we derive \eqref{gb}, thereby completing the proof.
\end{proof}

\printbibliography

\end{document}